\newtheorem{theorem}{Theorem}[section]
\newtheorem{lemma}[theorem]{Lemma}
\newtheorem{problem}{Problem}
\begin{document}
\textwidth 150mm \textheight 225mm
\title{Distance spectral radius conditions for edge-disjoint spanning trees and a forest with constraints\thanks{Supported by the National Natural Science Foundation of China (No. 12271439).}}
\author{{Yongbin Gao$^{a,b}$, Ligong Wang$^{a,b,}$\footnote{Corresponding author.}}\\
{\small $^a$ School of Mathematics and Statistics, Northwestern
Polytechnical University,}\\ {\small  Xi'an, Shaanxi 710129,
P.R. China.}\\
{\small $^b$ Xi'an-Budapest Joint Research Center for Combinatorics, Northwestern
Polytechnical University,}\\
{\small Xi'an, Shaanxi 710129,
P.R. China. }\\
{\small E-mail: gybmath@163.com, lgwangmath@163.com} }
\date{}
\maketitle
\begin{center}
\begin{minipage}{120mm}
\vskip 0.3cm
\begin{center}
{\small {\bf Abstract}}
\end{center}
{\small Let $k\ge 2$ be a positive integer and let $G$ be a simple graph of order $n$ with minimum degree $\delta$. A graph $G$ is said to have property $P(k, d)$ if it contains $k$ edge-disjoint spanning trees and an additional forest $F$ with edge number $|E(F)| > \frac{d-1}{d}(n-1)$, such that if $F$ is not a spanning tree, then $F$ has a component with at least $d$ edges. Let $D(G)$ be the distance matrix of $G$. We denote $\rho_D(G)$ as the largest eigenvalue of $D(G)$, which is called the distance spectral radius of $G$.
In this paper, we investigate the relationship between the distance spectral radius and the property $P(k, \delta)$. We prove that for a connected graph $G$ of order $n \ge 2k+8$ with minimum degree $\delta \ge k+2$, if $\rho_D(G) \le \rho_D(K_{k-1} \vee (K_{n-k} \cup K_1))$, then $G$ possesses property $P(k, \delta)$. Furthermore, for a connected balanced bipartite graph $G$ of order $n \ge 4k+8$ with minimum degree $\delta \ge k+2$, we show that if $\rho_D(G) \le \rho_D(K_{\frac{n}{2}, \frac{n}{2}} \setminus E(K_{1, \frac{n}{2}-k+1}))$, then $G$ also possesses property $P(k, \delta)$. Our results generalize the work of Fan et al. [Discrete Appl. Math. 376 (2025), 31–40] from the existence of $k$ edge-disjoint spanning trees to the more refined structural property $P(k, \delta)$.

\vskip 0.1in \noindent {\bf Key Words}: \ Edge-disjoint spanning trees, Distance spectral radius, Fractional packing number, Property $P(k, \delta)$ }
\end{minipage}
\end{center}

\section{Introduction}

In this paper, we consider finite, undirected, and simple graphs. Let $G$ be a graph with vertex set $V(G) = \{v_1, v_2,\ldots, v_n\}$ and edge set $E(G)$. We denote the order of $G$ by $n = |V(G)|$. For any vertex $v_i \in V(G)$($1 \le i \le n$), let $d_G(v_i)$ (or simply $d_i$) denote the degree of $v_i$, and let $\delta(G)$ (or simply $\delta$) be the minimum degree of $G$. For any two disjoint subsets $X, Y \subseteq V(G)$, let $E(X, Y)$ be the set of edges connecting a vertex in $X$ to a vertex in $Y$, and denote $e(X, Y) = |E(X, Y)|$. The spanning tree packing number of a graph $G$, denoted by $\tau(G)$, is defined as the maximum number of edge-disjoint spanning trees contained in $G$. For any two vertices $v_i, v_j \in V(G)$($1 \le i,j \le n$) of a connected graph $G$, the distance $d_G(v_i, v_j)$ is the length of a shortest path connecting them. The distance matrix of $G$, denoted by $D(G)$, is the $n \times n$ matrix whose $(v_i, v_j)$-entry is $d_G(v_i, v_j)$(or $d_{ij}$). The largest eigenvalue of $D(G)$, denoted by $\rho_D(G)$, is called the distance spectral radius of $G$.

The study of the relationship between graph spectra and spanning trees traces back to the Matrix-Tree Theorem proposed by Kirchhoff \cite{Kirchhoff1847} while investigating electrical networks. This work revealed the fundamental link between the total number of spanning trees and the Laplacian eigenvalues of a graph. For edge-disjoint spanning trees, Seymour proposed the following problem in a private communication with Cioab\v{a} (see \cite{Cioaba2012}).

\begin{problem}\label{prob1}
	Let $G$ be a connected graph. Determine the relationship between the spanning tree packing number $\tau(G)$ and the eigenvalues of $G$.
\end{problem}

Let $A(G)$ be the adjacency matrix of a graph $G$ of order $n$. The eigenvalues of $A(G)$ are denoted by $\lambda_1(G) \ge \lambda_2(G) \ge \cdots \ge \lambda_n(G)$. Motivated by Problem \ref{prob1}, Cioab\v{a} and Wong \cite{Cioaba2012} investigated the relationship between $\tau(G)$ and $\lambda_2(G)$. They established sufficient spectral conditions for $d$-regular graphs to satisfy $\tau(G) \ge k$ for $k=2,3$, and proposed a general conjecture for $4 \le k \le \lfloor d/2 \rfloor$. Subsequently, Gu et al. \cite{Gu2016} generalized this conjecture to graphs with minimum degree $\delta$. This conjecture was fully resolved by Liu et al. \cite{Liu2014, 2Liu2014}. They proved that for a graph $G$ with minimum degree $\delta \ge 2k$, if $\lambda_2(G) < \delta - \frac{2k-1}{\delta+1}$, then $\tau(G) \ge k$. The tightness of the bound was later confirmed by Cioab\v{a} et al. \cite{Cioaba2022} through the construction of extremal graphs. Furthermore, Liu et al. \cite{Liu2019} studied the spectral conditions for graphs with given girth. Beyond $\lambda_2(G)$, spectral conditions involving other eigenvalues, such as $\lambda_1(G)$, $\lambda_3(G)$ and Laplacian eigenvalues, have also been studied (see, e.g., \cite{Duan2020, Fan2023, Hong2016, Hu2023}).

Recently, Fan et al. \cite{Fan2025} established the relationship between the distance spectral radius and $\tau(G)$ by constructing extremal graphs. Let $D(G)$ be the distance matrix of $G$, and let $\rho_D(G)$ denote the distance spectral radius (the largest eigenvalue of $D(G)$). We denote by $G_1 \vee G_2$ the join of two disjoint graphs $G_1$ and $G_2$, which is obtained from the union $G_1 \cup G_2$ by adding all edges between $V(G_1)$ and $V(G_2)$. For a graph $H$ and a subset of edges $E' \subseteq E(H)$, let $H \setminus E'$ denote the graph obtained from $H$ by deleting the edges in $E'$. Let $K_{n,n}$ denote the complete bipartite graph with $n$ vertices in each part. A bipartite graph with partition sets of equal size is called balanced. Fan et al. \cite{Fan2025} obtained the following results.

\begin{theorem}[\cite{Fan2025}] \label{thm:Fan1}
	Let $k \ge 2$ be an integer and $G$ be a connected graph of order $n \ge 2k+6$. If
	\[
	\rho_D(G) \le \rho_D(K_{k-1} \vee (K_{n-k} \cup K_1)),
	\]
	then $\tau(G) \ge k$, unless $G \cong K_{k-1} \vee (K_{n-k} \cup K_1)$.
\end{theorem}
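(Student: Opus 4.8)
The natural framework is proof by contradiction anchored on the Nash--Williams--Tutte theorem, which asserts that $\tau(G)\ge k$ if and only if every partition $\mathcal P$ of $V(G)$ into $t$ nonempty classes satisfies $e_{\mathcal P}(G)\ge k(t-1)$, where $e_{\mathcal P}(G)$ denotes the number of edges whose ends lie in different classes. So I would suppose $\tau(G)\le k-1$ and $G\not\cong K_{k-1}\vee(K_{n-k}\cup K_1)$, and try to contradict the hypothesis by proving the strict inequality $\rho_D(G)>\rho_D\big(K_{k-1}\vee(K_{n-k}\cup K_1)\big)$. By Nash--Williams--Tutte there is then a partition $\mathcal P=\{V_1,\dots,V_t\}$ with $t\ge 2$ and $e_{\mathcal P}(G)\le k(t-1)-1$.

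The engine of the argument is the entrywise monotonicity of the distance matrix under edge addition: if $H'\subseteq H$ on a common vertex set and both are connected, then $D(H')\ge D(H)$ entrywise, so $\rho_D(H')\ge\rho_D(H)$, with strict inequality once $H'\ne H$ (some distance drops from $\ge 2$ to $1$, and $D$ is irreducible). Adding edges inside a class $V_i$, or adding cross edges up to the budget $k(t-1)-1$, leaves $e_{\mathcal P}(G)<k(t-1)$ and hence preserves $\tau\le k-1$; so I may pass to the denser graph $G^\ast$ in which every class is a clique and the cross-edge budget is used in full. Then $\rho_D(G)\ge\rho_D(G^\ast)$, and it suffices to show $\rho_D(G^\ast)\ge\rho_D(K_{k-1}\vee(K_{n-k}\cup K_1))$ with equality only for the extremal graph; strictness transfers back to $G$ because $G$ is a proper subgraph of $G^\ast$ unless it is already the extremal graph. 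This reduces the theorem to a finite optimization: minimize $\rho_D$ over connected graphs that are $t$ cliques joined by exactly $k(t-1)-1$ cross edges.

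I would run this optimization in two stages. First, a counting step: $|E(G^\ast)|\le\binom{n-t+1}{2}+k(t-1)-1$, which for $n\ge 2k+6$ is strictly maximized at $t=2$ with one class a singleton; combined with a crude distance/row-sum estimate this eliminates every configuration with $t\ge 3$, as well as grossly unbalanced cuts, by showing their $\rho_D$ exceeds the threshold. Second, and this is the delicate core, the case $t=2$ with class sizes $s$ and $n-s$: here $G^\ast$ consists of cliques $K_s,K_{n-s}$ joined by $k-1$ edges, and I must show $\rho_D$ is minimized exactly when $s=1$ and the single vertex is joined to $k-1$ vertices of $K_{n-1}$, i.e. at $K_{k-1}\vee(K_{n-k}\cup K_1)$. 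For this I would compute $\rho_D$ of each candidate from the distance quotient matrix of its equitable partition; for the extremal graph, on the classes $(V(K_{k-1}),V(K_{n-k}),\{v\})$, this matrix is
\[
\begin{pmatrix} k-2 & n-k & 1 \\ k-1 & n-k-1 & 2 \\ k-1 & 2(n-k) & 0 \end{pmatrix},
\]
whose Perron root equals $\rho_D(K_{k-1}\vee(K_{n-k}\cup K_1))$. The comparison would then proceed either by a shifting lemma (concentrating the $k-1$ cross edges onto a single vertex and decreasing $s$, each step shown to lower $\rho_D$ via Perron-component inequalities) or by evaluating the difference of the relevant characteristic polynomials at the threshold and determining its sign.

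The main obstacle is precisely this last comparison in the $t=2$ regime. The Rayleigh quotient against the all-ones vector, i.e. the Wiener-index bound $\rho_D(G^\ast)\ge 2W(G^\ast)/n$, is too weak near the extremal configuration: the extremal graph already has a vertex at distance $2$ from almost everything, so its $\rho_D$ sits well above the average distance and this cheap bound fails to separate it from its competitors. I therefore expect the crux to be a careful, parameter-uniform eigenvalue comparison over $2\le s\le n/2$ and over the placement of the $k-1$ cross edges, most cleanly handled by a monotone edge-shifting transformation whose effect on $\rho_D$ is controlled through the sign pattern of the Perron eigenvector, with the hypothesis $n\ge 2k+6$ guaranteeing that the estimates carry the correct sign throughout.
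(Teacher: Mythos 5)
Your skeleton (Nash--Williams--Tutte partition plus entrywise monotonicity of the distance matrix) is sound and is indeed the right frame, but your proposal misplaces where the difficulty lies, and at its self-declared crux it only names techniques rather than executing them. Note first that the paper does not prove this statement itself (it cites \cite{Fan2025}); however, its own proofs of Theorems \ref{thm:Main1} and \ref{thm:Main2} follow the same template and reveal the intended mechanism. The two-class case with both parts of size at least $2$ requires \emph{no} eigenvalue comparison at all: by Lemma \ref{lem:Fan1_bound}, $\rho_D(K_{k-1}\vee(K_{n-k}\cup K_1))<n+2$, and by Lemma \ref{lem:Wiener} with $W(G)\ge n(n-1)-e(G)$ one gets $e(G)>\frac{n(n-4)}{2}$; on the other hand a two-part configuration with both sides of size $\ge 2$ and at most $k-1$ cross edges has at most $\binom{2}{2}+\binom{n-2}{2}+(k-1)\le\frac{n(n-4)}{2}$ edges once $n\ge 2k+6$. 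So the all-ones Rayleigh bound you dismiss as ``too weak near the extremal configuration'' in fact kills exactly the range $2\le s\le n/2$ that you propose to attack with shifting lemmas and characteristic-polynomial comparisons; it is weak only at $s=1$, where $G$ is a spanning subgraph of $K_{k-1}\vee(K_{n-k}\cup K_1)$ and strict monotonicity of $\rho_D$ under edge deletion finishes the argument and yields the equality clause. Your ``delicate core'' is thus both unnecessary and, as written, an unproved gap.

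Conversely, the case you claim is dispatched by ``crude distance/row-sum estimates'' --- $t\ge 3$ --- contains the one genuinely hard configuration, and the crude estimates provably fail on it. For $t=3$ with two singleton classes $\{v_1\},\{v_2\}$, the cross-edge budget is $2k-1$, so the clique-saturated graph $G^\ast$ can have $\binom{n-2}{2}+2k-1$ edges, which exceeds $\frac{n(n-4)}{2}$ whenever $n<4k+4$; since the theorem only assumes $n\ge 2k+6$, neither edge counting nor the all-ones Rayleigh quotient can eliminate this case in the range $2k+6\le n<4k+4$. If one of the singletons has degree at most $k-1$, you can again fall back on monotonicity into the extremal graph, but the surviving configuration $d(v_1)=d(v_2)=k$ with $v_1\sim v_2$ (forced when the cross-edge count equals $2k-1$) is \emph{not} a subgraph of $K_{k-1}\vee(K_{n-k}\cup K_1)$, so it needs a genuine spectral argument, e.g.\ a Rayleigh test vector weighted up on the two peripheral vertices or a quotient-matrix comparison. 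It is telling that the paper's own proofs escape precisely this double-singleton configuration only by invoking the minimum-degree hypothesis $\delta\ge k+2$ (see Case 3 of the proof of Theorem \ref{thm:Main1}), a hypothesis the cited theorem does not have; any correct proof of the statement must therefore do real work exactly here, and your proposal leaves it uncovered.
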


\begin{theorem}[\cite{Fan2025}] \label{thm:Fan2}
	Let $k \ge 2$ be an integer and $G$ be a connected balanced bipartite graph of order $n \ge 4k+4$. If
	\[
	\rho_D(G) \le \rho_D\left(K_{\frac{n}{2}, \frac{n}{2}} \setminus E\left(K_{1, \frac{n}{2}-k+1}\right)\right),
	\]
	then $\tau(G) \ge k$, unless $G \cong K_{\frac{n}{2}, \frac{n}{2}} \setminus E\left(K_{1, \frac{n}{2}-k+1}\right)$.
\end{theorem}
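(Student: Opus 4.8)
The plan is to argue by contradiction, combining the Nash--Williams/Tutte characterization of spanning-tree packings with the Perron--Frobenius monotonicity of the distance spectral radius under edge addition. Write $H := K_{\frac{n}{2},\frac{n}{2}} \setminus E(K_{1,\frac{n}{2}-k+1})$ for the extremal graph, and recall that $\tau(G) \ge k$ if and only if every partition $\mathcal{P} = \{V_1,\dots,V_r\}$ of $V(G)$ satisfies $e_G(\mathcal{P}) \ge k(r-1)$, where $e_G(\mathcal{P})$ counts the edges joining distinct parts. Assume $G$ is connected balanced bipartite with $\rho_D(G) \le \rho_D(H)$ and suppose $\tau(G) \le k-1$; I must deduce $G \cong H$. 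By Nash--Williams there is a partition with $e_G(\mathcal{P}) \le k(r-1)-1$, and this sparse cut is what I shall exploit.

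The first reduction is to an edge-maximal graph. Let $A,B$ be the two sides, each of size $\frac n2$. Any cross-edge of $K_{\frac n2,\frac n2}$ missing from $G$ joins $A$ to $B$ and, being a non-edge of the bipartite graph $G$, lies at distance at least $3$; adding it strictly lowers that distance, so $D(G+e) \le D(G)$ entrywise with $D(G+e)\ne D(G)$, and since $D(G)$ is irreducible the Perron--Frobenius comparison gives $\rho_D(G+e) < \rho_D(G)$. Thus I may repeatedly add cross-edges, strictly decreasing $\rho_D$ at each step, until reaching an edge-maximal balanced bipartite graph $G^\ast$ with $\tau(G^\ast)\le k-1$. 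It therefore suffices to prove the key lemma: every edge-maximal balanced bipartite graph with $\tau \le k-1$ satisfies $\rho_D \ge \rho_D(H)$, with equality only for $H$. Granting this, if any edge had been added we would have $\rho_D(G^\ast) < \rho_D(G) \le \rho_D(H) \le \rho_D(G^\ast)$, a contradiction; hence $G = G^\ast$ is itself edge-maximal, so $\rho_D(G) \ge \rho_D(H)$, and together with the hypothesis this forces equality and $G \cong H$.

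To prove the lemma I would classify $G^\ast$ through a witnessing partition $\mathcal{P}$. Edge-maximality forces each part to be internally complete bipartite (adding an intra-part edge leaves $e_{G^\ast}(\mathcal{P})$, hence $\tau$, unchanged) and forces the cut to be tight, $e_{G^\ast}(\mathcal{P}) = k(r-1)-1$ (otherwise a further cross-edge could be added with the same witness, and if no cross-edge were addable $G^\ast$ would be the complete bipartite graph, whose packing number exceeds $k$ once $n\ge 4k+4$). The controlling dichotomy is whether the sparse cut isolates a single vertex: if $r=2$ with one part a singleton $\{v\}$, then $\deg(v) = k-1$ and internal completeness of the remaining block identifies $G^\ast \cong H$. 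For every other admissible partition I would bound $\rho_D(G^\ast)$ from below by a test vector constant on the parts of $\mathcal{P}$, comparing the resulting Rayleigh quotient $x^\top D(G^\ast) x / x^\top x$ against the Perron root of the $4\times 4$ quotient matrix of the equitable partition $\{v\}, B_1, A\setminus\{v\}, B_2$ of $H$, which equals $\rho_D(H)$. The guiding estimate is that $H$ has only the $\frac{n}{2}-k+1$ pairs incident to $v$ at distance $3$, whereas any cut splitting $G^\ast$ into two large blocks creates $\Theta(n^2)$ pairs at distance at least $3$.

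The heart of the difficulty, and the step I expect to be the main obstacle, is this quantitative comparison in the \emph{balanced-cut} and \emph{multi-part} ($r \ge 3$) regimes: I must show that dispersing the $k(r-1)-1$ cut edges across a separation into two large parts inflates the transmission enough to push $\rho_D(G^\ast)$ strictly above $\rho_D(H)$, and that no value $r \ge 3$ beats the single-vertex cut. This is precisely where the order hypothesis $n \ge 4k+4$ enters, ensuring the $\Theta(n^2)$ long-distance pairs of a balanced cut dominate the $O(n)$ long-distance pairs of $H$. The most delicate boundary case is when a part is small but not a singleton (a near-isolated cluster of two or three vertices), where the gap $\rho_D(G^\ast) - \rho_D(H)$ is smallest; establishing its strict positivity---so that equality truly forces $G \cong H$---requires careful quotient-matrix estimates rather than crude pair-counting.
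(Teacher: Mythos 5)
Your scaffolding is logically sound: the Nash--Williams/Tutte reformulation, the Perron--Frobenius fact that adding a cross-edge (which joins two vertices at distance $\ge 3$ in a bipartite graph) strictly decreases $\rho_D$, and the resulting reduction to edge-maximal graphs with $\tau \le k-1$ all check out, as does the classification step showing that an edge-maximal witness with a tight cut has internally complete bipartite parts, with the $r=2$ singleton case giving exactly $H$. The genuine gap is that your ``key lemma'' --- every other edge-maximal configuration satisfies $\rho_D > \rho_D(H)$ --- is the entire quantitative content of the theorem, and you do not prove it; you only gesture at a quotient-matrix comparison and explicitly defer the cases where the theorem is actually tight. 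Worse, your guiding estimate ($\Theta(n^2)$ long-distance pairs for a balanced cut versus $O(n)$ for $H$) is simply inapplicable in those deferred cases: for $r=2$ with $|V_1|=2$, or $r=3$ with two singleton parts, the number of distance-$3$ pairs is also $\Theta(n)$, the Wiener-index surplus over $H$ is only about $2m-4$ (where $m=n/2$), and the resulting gap in $\rho_D$ is $O(1)$. One can check (e.g., $W(G^\ast)\approx 3m^2+2m-2k-2$ versus $W(H)=3m^2-2k+2$, so $2W(G^\ast)/n \approx 3m+2 > 3m+1 > \rho_D(H)$ by Lemma \ref{lem:Fan2_bound}) that the inequality survives by a margin of roughly $1$, but this computation, carried out uniformly over all admissible $(r, |V_1|, \dots)$ configurations and cut-edge distributions, \emph{is} the proof, and none of it appears in your proposal.

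It is also worth noting that the route actually taken in the literature (Fan et al.\ \cite{Fan2025}, mirrored in this paper's proof of Theorem \ref{thm:Main2}) dispenses with the edge-maximality reduction entirely: from the sparse partition one gets an \emph{upper} bound on $e(G)$ via the combinatorial inequalities (Lemmas \ref{lem:comb_ineq} and \ref{lem:alg_ineq}), while the spectral hypothesis plus Lemma \ref{lem:Fan2_bound} and the all-ones test vector (Lemma \ref{lem:Wiener}, i.e.\ $\rho_D(G) \ge 2W(G)/n \ge \frac{5n}{2}-2-\frac{4e(G)}{n}$) gives a \emph{lower} bound $e(G) > \frac{n(n-3)}{4}$; these clash for every partition shape except the near-extremal ones with singleton parts, which are then analyzed directly. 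The all-ones vector is the simplest instance of your ``test vector constant on the parts,'' and it already suffices, so your $4\times 4$ quotient matrix of $H$ is unnecessary machinery. If you want to salvage your plan, replace the quotient-matrix step by exactly this pair-counting of forced distance-$\ge 3$ pairs in each edge-maximal configuration and compare $2W(G^\ast)/n$ against the bound $\rho_D(H) < \frac{3n}{2}+1$; until that case analysis is written out, what you have is a plan, not a proof.
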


A fundamental tool for studying edge-disjoint spanning trees is the classical Tree Packing Theorem, established by Nash-Williams \cite{Nash-Williams1961} and Tutte \cite{Tutte1961}. Let $\mathcal{P} = \{V_1, V_2, \ldots, V_p\}$ be a partition of the vertex set $V(G)$ into $p$ parts, where $p = |\mathcal{P}| \ge 2$. For any nontrivial graph $G$, the fractional packing number $\nu_f(G)$ is defined by
\[
\nu_f(G) = \min_{p \ge 2} \frac{\sum_{1 \le i < j \le p} e(V_i, V_j)}{p-1}.
\]
\begin{theorem}[\cite{Nash-Williams1961, Tutte1961}] \label{thm:TreePacking}
	For a graph $G$ and an integer $k \ge 1$, $\tau(G) \ge k$ if and only if $\nu_f(G) \ge k$.
\end{theorem}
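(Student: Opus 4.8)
The plan is to derive this criterion from the matroid union theorem applied to the graphic matroid of $G$, after first settling the easy implication by direct counting. Necessity (that $\tau(G) \ge k$ forces $\nu_f(G) \ge k$) is elementary: suppose $T_1, \ldots, T_k$ are edge-disjoint spanning trees and fix any partition $\mathcal{P} = \{V_1, \ldots, V_p\}$ with $p \ge 2$. Contracting each part $V_i$ to a single vertex turns every $T_j$ into a connected spanning subgraph on $p$ vertices, so each $T_j$ uses at least $p - 1$ edges joining distinct parts. Since the $T_j$ are pairwise edge-disjoint, $\sum_{1 \le i < j \le p} e(V_i, V_j) \ge k(p-1)$, and taking the minimum over all $\mathcal{P}$ yields $\nu_f(G) \ge k$.

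For the reverse implication I would pass to the graphic matroid $M(G)$ on ground set $E = E(G)$, whose rank function is $r(F) = n - c(V, F)$, where $c(V, F)$ is the number of components of the spanning subgraph $(V, F)$. Since $\nu_f(G) \ge k \ge 1$ excludes a disconnected $G$ (otherwise the partition of $V$ into the components of $G$ would have no crossing edges), we have $r(E) = n - 1$, and the bases of $M(G)$ are exactly the spanning trees of $G$; hence $\tau(G) \ge k$ is equivalent to $M(G)$ admitting $k$ pairwise disjoint bases. Applying the matroid union theorem to the $k$-fold union $M(G) \vee \cdots \vee M(G)$ gives the base-packing criterion that $M(G)$ has $k$ disjoint bases if and only if $|E \setminus F| \ge k\bigl(r(E) - r(F)\bigr)$ for every $F \subseteq E$. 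After substituting the values of $r$, this becomes
\[
|E \setminus F| \ge k\bigl(c(V, F) - 1\bigr) \qquad \text{for all } F \subseteq E.
\]
It then remains to match this inequality with the partition bound $\nu_f(G) \ge k$. Given any $F$, let $\mathcal{P}$ partition $V$ into the components of $(V, F)$, so $p = c(V, F)$; every crossing edge lies outside $F$, hence $k(p-1) \le \sum_{1 \le i < j \le p} e(V_i, V_j) \le |E \setminus F|$. Conversely, given $\mathcal{P}$ with $p$ parts, let $F$ collect the edges with both endpoints in one part, so $|E \setminus F| = \sum_{1 \le i < j \le p} e(V_i, V_j)$ and $c(V, F) \ge p$; the matroid inequality then forces $\sum_{1 \le i < j \le p} e(V_i, V_j) \ge k(c(V, F) - 1) \ge k(p-1)$. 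This equivalence completes the argument.

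I expect the main obstacle to be the matroid union theorem itself, namely the formula $\min_{F \subseteq E}\bigl(|E \setminus F| + k\,r(F)\bigr)$ for the rank of the $k$-fold union and the packing criterion extracted from it. If one does not wish to invoke it as a black box, this step must instead be carried out by a direct combinatorial augmentation: beginning from a maximal collection of $k$ edge-disjoint forests that is not yet a collection of $k$ spanning trees, one uses the partition hypothesis to find a component structure along which a sequence of edge exchanges strictly increases the number of edges used. Arranging these exchanges so that the process terminates and genuinely exploits the bound $\nu_f(G) \ge k$ at a deficient partition is where the real difficulty lies; the remainder is bookkeeping.
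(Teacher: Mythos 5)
The paper does not prove this statement at all: Theorem \ref{thm:TreePacking} is quoted as the classical Tree Packing Theorem with citations to Nash-Williams and Tutte, and is used in the paper purely as a black box. So there is no internal proof to compare against, and your proposal must be judged on its own merits; on that score it is correct. The necessity direction (contracting each part of a partition and counting at least $p-1$ crossing edges per spanning tree) is the standard elementary argument. For sufficiency, your reduction to the base-packing criterion for the graphic matroid is sound: you correctly observe that $\nu_f(G) \ge k \ge 1$ forces $G$ to be connected, so that the bases of $M(G)$ are exactly the spanning trees; you state the correct consequence of the matroid union theorem ($k$ disjoint bases exist if and only if $|E \setminus F| \ge k\,(r(E) - r(F))$ for all $F \subseteq E$); and the two-way translation between edge subsets $F$ and vertex partitions $\mathcal{P}$ — taking components of $(V,F)$ in one direction, and the edges interior to parts (with $c(V,F) \ge p$) in the other — is handled correctly in both directions. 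The one caveat, which you identify yourself, is that the entire weight of the theorem rests on the matroid union theorem; the original Nash-Williams/Tutte proofs, like the augmentation argument you sketch as an alternative, do that work directly by induction and edge exchanges. Invoking matroid union is nonetheless a legitimate and standard modern route to this result, so, granted that theorem as prior knowledge, your proposal is a complete proof of the stated equivalence.
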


Spectral conditions related to the fractional packing number were first investigated by Hong et al. \cite{Hong2016}. More recently, Fang and Yang \cite{Fang2025} offered a structural interpretation for the fractional part of $\nu_f(G)$. Based on this result, Cai and Zhou \cite{Cai2026} formally introduced the property $P(k, d)$. A graph $G$ possesses property $P(k, d)$ if the following conditions hold:
\begin{enumerate}[{\rm(a)}]
	\item $G$ contains $k$ edge-disjoint spanning trees (i.e., $\tau(G) \ge k$);
	\item apart from these $k$ trees, there exists a forest $F$ such that $|E(F)| > \frac{d-1}{d}(n-1)$;
	\item if $F$ is not a spanning tree, then $F$ has a component with at least $d$ edges.
\end{enumerate}

The following theorem by Fang and Yang \cite{Fang2025} (see also \cite{Cai2026}) connects the fractional packing number to property $P(k, d)$ and is essential for our proofs.

\begin{theorem}[\cite{Cai2026, Fang2025}] \label{thm:fractional packing}
	Let $k$ and $d$ be positive integers. For a nontrivial graph $G$, if 
	\[\nu_f(G) > k+\frac{d-1}{d},\] 
	then $G$ has property $P(k, d)$.
\end{theorem}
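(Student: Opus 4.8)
The plan is to convert the hypothesis on $\nu_f(G)$ into a statement about the largest forest that can be packed alongside $k$ spanning trees, and then to read off conditions (a)--(c) from it.

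First I dispose of the easy parts. Since $\nu_f(G) > k + \frac{d-1}{d} \ge k$, Theorem~\ref{thm:TreePacking} gives $\tau(G)\ge k$, which is (a). If in fact $\nu_f(G)\ge k+1$, then $\tau(G)\ge k+1$ and a $(k+1)$-st spanning tree may be taken as $F$: it has $n-1>\frac{d-1}{d}(n-1)$ edges and is spanning, so (b) holds and (c) is vacuous. Hence I may assume $k+\frac{d-1}{d}<\nu_f(G)<k+1$, so that $\tau(G)=k$ and the extra forest $F$ is necessarily non-spanning. Next I reformulate ``packable forest'': for a forest $F$, the $k$ spanning trees and $F$ can be taken pairwise edge-disjoint precisely when $\tau(G\setminus E(F))\ge k$, which by Theorem~\ref{thm:TreePacking} means that for every partition $\mathcal P=\{V_1,\dots,V_p\}$ the number of edges of $F$ crossing $\mathcal P$ is at most $e(\mathcal P)-k(p-1)$. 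Since a forest has at most $\min(p-1,\,e(\mathcal P)-k(p-1))$ crossing edges and at most $n-p$ edges inside the parts, every such $F$ satisfies $|E(F)|\le (n-p)+\min(p-1,\,e(\mathcal P)-k(p-1))$; the matroid union theorem shows this bound is attained, so the maximum size of a forest packable with $k$ spanning trees is $(n-1)-\Delta$, where
\[
\Delta=\max_{\mathcal P}\big[(k+1)(p-1)-e(\mathcal P)\big],
\]
and $\Delta\ge 1$ in the remaining case.

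Condition (b) is then a short computation. Using $e(\mathcal P)>\big(k+\frac{d-1}{d}\big)(p-1)$ for every partition with $2\le p\le n$, I obtain
\[
(k+1)(p-1)-e(\mathcal P)<(p-1)\Big(1-\tfrac{d-1}{d}\Big)=\tfrac{p-1}{d}\le\tfrac{n-1}{d},
\]
so $\Delta<\frac{n-1}{d}$ and $|E(F)|=(n-1)-\Delta>\frac{d-1}{d}(n-1)$, which is (b). Applying the same inequality at the partition $\mathcal P^{\dagger}$ attaining $\Delta$ shows $p^{\dagger}-1>d\Delta$, i.e.\ the extremal partition has many parts; this is what will create room for the next step.

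The main obstacle is condition (c): forcing one component of $F$ to carry at least $d$ edges. A pure averaging argument is \emph{not} enough: $F$ has $\Delta+1$ components on $n$ vertices with $\Delta<\frac{n-1}{d}$, and this only guarantees a component with more than $\frac{nd}{n+d-1}$ vertices, i.e.\ with roughly $d-1$ edges, so the naive bound is off by exactly one. To close this gap I would exploit the freedom in choosing $F$ by a matroid-union exchange. Take a maximum packable forest $F$; if its largest component already has $\ge d$ edges we are done, so suppose every component has at most $d-1$ edges, and let $\mathcal P_0$ be the partition of $V(G)$ into the $\Delta+1$ components of $F$. Because $F$ is maximum, no edge of $G$ joining two distinct components is unused (otherwise it could be added to $F$), so every $\mathcal P_0$-crossing edge lies in some $T_i$. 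I would then select a crossing edge $f\in T_1$ incident to a largest component and perform the exchange in the union matroid $kM(G)\vee M(G)$ that moves $f$ into $F$ while rerouting $T_1$ through a replacement edge, thereby merging two components into one with at least $|C_1|+|C_2|+1$ edges while keeping $|E(F)|$ and the $k$-tree packing intact; iterating this merge along $T_1$ raises the largest component past $d$ edges. Making the exchange rigorous---verifying that the augmenting/lateral step in $kM(G)\vee M(G)$ is realizable and that it \emph{concentrates} edges into one component rather than merely redistributing them, possibly cascading through several trees---is the crux of the whole argument, and the slack $\frac{n-1}{d}-\Delta>0$ secured in (b) gives the room needed in case an edge must be sacrificed during the rerouting.
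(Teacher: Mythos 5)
First, a point of context: the paper does not prove this theorem at all --- it is imported as a black box from \cite{Fang2025} and \cite{Cai2026} --- so there is no internal proof to compare against; your attempt can only be judged on its own terms. On those terms, your treatment of (a) and (b) is essentially sound. The reduction of ``largest forest packable beside $k$ spanning trees'' to $(n-1)-\Delta$ with $\Delta=\max_{\mathcal P}\left[(k+1)(p-1)-e(\mathcal P)\right]$ is the correct matroid-union rank computation (and the attainment argument can be made rigorous: in an augmenting step the $k$ parts of size $n-1$ are bases of $M(G)$, so only the forest part can grow). One side claim is false --- a forest can have far more than $p-1$ crossing edges, e.g.\ a path in $K_{2,2}$ has three edges crossing the bipartition --- but since you only use it inside a minimum, it does not affect the formula. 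The deduction $\Delta<\frac{n-1}{d}$, hence $|E(F)|>\frac{d-1}{d}(n-1)$, is correct, as is your diagnosis that averaging over the $\Delta+1$ components falls short of condition (c) by exactly one edge.

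The genuine gap is condition (c), and the exchange you sketch cannot work as described. You propose to move a crossing edge $f\in T_1$ into $F$ and reroute $T_1$ through a replacement edge, ``thereby merging two components \dots while keeping $|E(F)|$ and the $k$-tree packing intact.'' But ask where the replacement edge $g$ comes from. It cannot be an edge unused by $F$ and all the $T_i$: if such a $g$ reconnected $T_1-f$, then $T_1-f+g,\,T_2,\dots,T_k,\,F+f$ would be a valid packing with a strictly larger forest, contradicting the maximality of $F$ you have already invoked. Hence $g$ must be extracted from $F$ itself (possibly after a cascade through other trees, which by the same counting must terminate by removing an edge of $F$), and deleting $g$ splits a component of $F$. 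So in any maximum packing $F$ has exactly $\Delta+1$ components, and every merge is forced to be paired with a split: the operation redistributes component sizes rather than monotonically growing the largest one, and iterating it may simply cycle. The slack $\frac{n-1}{d}-\Delta>0$ cannot repair this, because the obstruction is structural (maximality of $F$), not a counting deficit. What is missing is precisely an argument that \emph{some} rearrangement of a maximum packing concentrates at least $d$ edges in one component of $F$; that is the actual content of Fang and Yang's theorem, and until it is supplied your proposal proves only (a) and (b), i.e.\ a strictly weaker statement than property $P(k,d)$.
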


Naturally, this raises a fundamental question.

\begin{problem}\label{prob2}
	Let $k$ be a positive integer, and let $G$ be a nontrivial graph with minimum degree $\delta$. What conditions on the eigenvalues of $G$ are sufficient to guarantee that $G$ has property $P(k, \delta)$?
\end{problem}

Cai and Zhou \cite{Cai2026} initiated the study of Problem \ref{prob2} by establishing conditions involving the spectral radius $\rho(G)$ and the second largest eigenvalue $\lambda_2(G)$. In this paper, we extend this line of research to the distance matrix of a graph.
Inspired by the work of Cai and Zhou \cite{Cai2026} and Fan et al. \cite{Fan2025}, this paper extends Theorem \ref{thm:Fan1} and Theorem \ref{thm:Fan2} in \cite{Fan2025} to the property $P(k, \delta)$. Our main results are stated as follows.

\begin{theorem} \label{thm:Main1}
	Let $k \ge 2$ be an integer and $G$ be a connected graph of order $n \ge 2k+8$ with minimum degree $\delta \ge k+2$. If
	\[
	\rho_D(G) \le \rho_D(K_{k-1} \vee (K_{n-k} \cup K_1)),
	\]
	then $G$ has property $P(k, \delta)$.
\end{theorem}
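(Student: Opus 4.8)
The plan is to argue by contradiction, converting the distance-spectral hypothesis into a lower bound on $\rho_D(G)$ that contradicts $\rho_D(G)\le \rho_D(K_{k-1}\vee(K_{n-k}\cup K_1))$. Suppose $G$ satisfies the hypotheses but fails to have property $P(k,\delta)$. By the contrapositive of Theorem \ref{thm:fractional packing}, $\nu_f(G)\le k+\frac{\delta-1}{\delta}=k+1-\frac1\delta$, so by the definition of the fractional packing number there is a partition $\mathcal P=\{V_1,\dots,V_p\}$ of $V(G)$ with $p\ge 2$ and $\sum_{1\le i<j\le p}e(V_i,V_j)\le(k+1-\frac1\delta)(p-1)$. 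The whole argument then reduces to showing that such a sparse cut, together with the degree hypothesis $\delta\ge k+2$, forces $\rho_D(G)>\rho_D(K_{k-1}\vee(K_{n-k}\cup K_1))$.

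First I would record the extremal value. Since $K_{k-1}\vee(K_{n-k}\cup K_1)$ has diameter $2$ and an equitable partition into $\{v\}$ (the $K_1$), the $K_{n-k}$-part, and the $K_{k-1}$-part, $\rho_D(K_{k-1}\vee(K_{n-k}\cup K_1))$ is the largest eigenvalue of the associated $3\times 3$ quotient matrix, which is $n+O(1)$ and in particular is controlled from above by an explicit linear bound. Next I would dispose of $\operatorname{diam}(G)\ge 3$: a pair $u,w$ with $d_G(u,w)=3$ has $N_G(u)\cap N_G(w)=\emptyset$, so $u$ is non-adjacent to all of $\{w\}\cup N_G(w)$, giving at least $\delta+1\ge k+3$ non-neighbours of $u$; combined with the crossing non-edges produced below, the extra distance-$3$ contributions make the Wiener index, hence $\rho_D(G)\ge \frac{2W(G)}{n}$, comfortably exceed the extremal value for $n\ge 2k+8$, so this case is the less delicate one.

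The core case is $\operatorname{diam}(G)=2$, where $D(G)=(J-I)+\overline A$ with $\overline A=A(\overline G)$, so that $\rho_D(G)=\rho((J-I)+\overline A)$ is monotone increasing in the non-edges of $G$. The degree hypothesis becomes $\Delta(\overline G)\le n-k-3$, whereas the extremal complement is the star $K_{1,n-k}$ of degree $n-k$; this is exactly why $G$ cannot be the extremal graph and why the inequality should be strict. Quantitatively, $W(G)\ge\binom n2+\big(\sum_{i<j}|V_i||V_j|-\sum_{i<j}e(V_i,V_j)\big)$, and the feasibility of $\mathcal P$ under $\delta\ge k+2$ constrains the shape of $\mathcal P$: no part can be a singleton in a $2$-partition, every separated block must have size $\ge k+3$, and a big-part-plus-singletons partition must contain at least four singletons. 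In each such configuration $\sum_{i<j}|V_i||V_j|-(k+1)(p-1)$ exceeds $n-k$ by a definite margin, which is the value attained by the low-degree extremal graph, so the counting already points in the right direction.

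The hard part will be the sharp comparison in the diameter-$2$ case near the threshold $n=2k+8$: the crude bound $\rho_D(G)\ge 2W(G)/n$ from the all-ones test vector is not by itself strong enough for small $k$, so I expect to need a tailored test vector mimicking the Perron vector of the extremal graph (uniform on the bulk, perturbed on the few low-connectivity vertices), or an edge-switching reduction that pushes any counterexample to a canonical form while decreasing $\rho_D$, followed by a direct eigenvalue estimate. A secondary difficulty, genuinely new compared with Theorem \ref{thm:Fan1}, is that failing $P(k,\delta)$ splits into the regime $\tau(G)<k$ (where $\nu_f<k$) and the regime $\tau(G)=k$ with too small a surplus (where $k\le\nu_f\le k+1-\frac1\delta$); both must be captured uniformly by the partition bound above, and verifying that the second regime admits no denser counterexample than the first is precisely where the hypothesis $\delta\ge k+2$ must be exploited most carefully.
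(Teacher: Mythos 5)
Your opening reduction is exactly the paper's: assume $G$ fails $P(k,\delta)$, invoke Theorem \ref{thm:fractional packing} to get $\nu_f(G)\le k+\frac{\delta-1}{\delta}$, and extract a partition $\mathcal{P}=\{V_1,\dots,V_p\}$ with $\sum_{i<j}e(V_i,V_j)\le\big(k+1-\frac1\delta\big)(p-1)$. But from that point on your text is a plan rather than a proof, and the plan defers precisely the step that carries all the content. You split into $\operatorname{diam}(G)\ge 3$ and $\operatorname{diam}(G)=2$, wave at the first case, and then concede that in the second case ``the crude bound $\rho_D(G)\ge 2W(G)/n$ \dots is not by itself strong enough for small $k$,'' proposing instead an unspecified tailored test vector or an edge-switching reduction. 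Neither is constructed, no inequality is verified, and no contradiction is ever reached; the claim that feasible partitions force $\sum_{i<j}|V_i||V_j|-(k+1)(p-1)$ to ``exceed $n-k$ by a definite margin'' is asserted, not proved. That is a genuine gap: the entire quantitative comparison against $\rho_D(K_{k-1}\vee(K_{n-k}\cup K_1))$ is missing.

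Moreover, the premise on which you defer is false, and this is worth internalizing because it is what makes the actual proof short. The paper uses exactly the ``crude'' bound you dismiss: from $W(G)\ge \frac12\sum_i\big(d_i+2(n-1-d_i)\big)=n(n-1)-e(G)$ and Lemma \ref{lem:Wiener}, together with $\rho_D(G)<n+2$ (Lemma \ref{lem:Fan1_bound}), one gets $e(G)>\frac{n(n-4)}{2}$ --- valid for every diameter, so no diameter case split is needed. The contradiction then comes from pure edge counting over the partition: for $s\ge 4$, Lemma \ref{lem:comb_ineq} gives $e(G)<\binom{n-s+1}{2}+(k+1)(s-1)<\frac{n(n-4)}{2}$ once $n\ge 2k+8$; for $s=2$ and $s=3$, the same counting forces the small parts to be singletons, and then the hypothesis $\delta\ge k+2$ contradicts $d_G(v_1)\le k+1-\frac1\delta$ (respectively $d_G(v_1)+d_G(v_2)\le 2k+3-\frac2\delta$). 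Your observation that singleton parts clash with $\delta\ge k+2$ is the right ingredient --- the paper uses it in exactly this way --- but you deploy it only to constrain the ``shape'' of $\mathcal{P}$ and never close the loop with an edge count. So the approach you sketch could likely be completed, but as written it stops exactly where the work begins, and the refined spectral machinery you anticipate needing (Perron-vector test vectors, switching arguments) is unnecessary.
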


\begin{theorem} \label{thm:Main2}
	Let $k \ge 2$ be an integer and $G$ be a connected balanced bipartite graph of order $n \ge 4k+8$ with minimum degree $\delta \ge k+2$. If
	\[
	\rho_D(G) \le \rho_D\left(K_{\frac{n}{2}, \frac{n}{2}} \setminus E\left(K_{1, \frac{n}{2}-k+1}\right)\right),
	\]
	then $G$ has property $P(k, \delta)$.
\end{theorem}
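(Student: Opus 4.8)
I plan to prove Theorem \ref{thm:Main2} by reducing the structural conclusion to a lower bound on the fractional packing number and then arguing by contradiction on the distance spectrum; Theorem \ref{thm:Main1} is entirely analogous. Write $H:=K_{\frac n2,\frac n2}\setminus E(K_{1,\frac n2-k+1})$ for the extremal graph and let $X,Y$ be the two colour classes of $G$, so $|X|=|Y|=\frac n2$. Applying Theorem \ref{thm:fractional packing} with $d=\delta$, it suffices to show $\nu_f(G)>k+\frac{\delta-1}{\delta}$. Suppose instead that $\nu_f(G)\le k+\frac{\delta-1}{\delta}$; then there is a partition $\mathcal P=\{V_1,\dots,V_p\}$ of $V(G)$ with $p\ge 2$ and
\[
\sum_{1\le i<j\le p}e(V_i,V_j)\;\le\;\Big(k+\tfrac{\delta-1}{\delta}\Big)(p-1)\;<\;(k+1)(p-1),
\]
and the goal becomes to derive $\rho_D(G)>\rho_D(H)$, contradicting the hypothesis.

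First I would extract structural information from $\mathcal P$. Writing $x_i=|V_i\cap X|$, $y_i=|V_i\cap Y|$ and using $\sum_{i<j}e(V_i,V_j)=|E(G)|-\sum_i e(V_i)$ together with $e(V_i)\le x_iy_i$, $|E(G)|\ge\frac{n\delta}{2}$ and $\sum_i x_i=\sum_i y_i=\frac n2$, one gets $\sum_i x_iy_i\ge\frac{n\delta}{2}-(k+1)(p-1)$. I expect two regimes. If $p$ is large, the bound forces $|E(G)|$ to be of order $(k+1)n$, so that $G$ is globally sparse (average degree $O(k)$); such a bipartite graph is far from complete bipartite, its average distance grows, and $\rho_D(G)$ is readily seen to exceed $\rho_D(H)$. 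If $p$ is bounded, the vertex mass concentrates in one dominant part; letting $S$ be the union of the remaining parts, the hypothesis $\delta\ge k+2$ controls $|S|$, since every vertex of $S$ emits at least $\delta$ edges while fewer than $(k+1)(p-1)$ of them cross $\mathcal P$, and $e(S)\le|S|^2/4$ then yields $|S|\gtrsim 2k$. In this regime I would pass to the spanning supergraph $G^{*}\supseteq G$ obtained by completing each side of the cut $(S,\overline{S})$ to a complete bipartite graph and keeping the few crossing edges, so that $\rho_D(G)\ge\rho_D(G^{*})$ while the cross distances stay explicit.

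The decisive step is the spectral comparison in the bounded-$p$ regime. Since only a few edges cross $(S,\overline{S})$, in $G^{*}$ almost every pair with one endpoint in $S$ and the other in $\overline{S}$ sits at distance at least $3$, so the block $S\times\overline{S}$ of size $|S|\,(n-|S|)$ carries inflated distances. I would capture this with a two-value test vector equal to $a$ on $S$ and $b$ on $\overline{S}$ and maximise the resulting $2\times 2$ quadratic form; the crude Rayleigh bound coming from the all-ones vector must be avoided, as it only sees the average Wiener value and misses the concentration of large distances near the cut. This gives a lower bound of the shape $\rho_D(G^{*})\ge\frac{3n}{2}+c\sqrt{|S|\,n}$ for an absolute constant $c>0$. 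On the other side, $H$ differs from $K_{\frac n2,\frac n2}$ only at its single deficient vertex, so writing $D(H)=D(K_{\frac n2,\frac n2})+2E$ with $E$ supported on the $\frac n2-k+1$ far pairs gives $\rho_D(H)\le\rho_D(K_{\frac n2,\frac n2})+2\lVert E\rVert=\frac{3n}{2}-2+O(\sqrt n)$, the correction being only of order $\sqrt n$. Because $|S|\gtrsim 2k$ makes $\sqrt{|S|\,n}$ dominate this correction once $n\ge 4k+8$ and $k\ge 2$, we obtain $\rho_D(G)\ge\rho_D(G^{*})>\rho_D(H)$, the desired contradiction.

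The hard part will be exactly this comparison in its most unbalanced instance, when $|S|$ is as small as the degree condition permits: there the distance inflation from the cut is weakest and the lower bound on $\rho_D(G^{*})$ comes closest to $\rho_D(H)$, so the two corrections of order $\sqrt n$ have to be compared with their precise constants rather than asymptotically. Consequently the thresholds $n\ge 4k+8$ and $\delta\ge k+2$ must be used sharply, both to pin down $|S|\gtrsim 2k$ and to secure strictness of $\rho_D(G)>\rho_D(H)$; note that $G\cong H$ is automatically excluded, since $H$ has a vertex of degree $k-1<k+2\le\delta$. A secondary obstacle is unifying the two regimes of the structural step and ruling out that many small parts could, in aggregate, beat a single dominant cut, which I expect to require a separate counting argument bounding the total edge deficiency of $G$ relative to $K_{\frac n2,\frac n2}$.
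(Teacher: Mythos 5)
Your opening reduction (apply Theorem \ref{thm:fractional packing} with $d=\delta$, take a partition witnessing $\nu_f(G)\le k+\frac{\delta-1}{\delta}$) coincides with the paper's, but the decisive spectral comparison rests on two quantitative claims that are wrong or too weak, and together they break the proof. First, the bound $\rho_D(G^{*})\ge\frac{3n}{2}+c\sqrt{|S|\,n}$ has the wrong order of magnitude. The distance matrix of $K_{\frac n2,\frac n2}$ has spectrum $\frac{3n}{2}-2$, $\frac n2-2$, and $-2$ (the last with multiplicity $n-2$), so its Perron vector is $\mathbf{1}$ and its spectral gap is $n$. Writing $D(G^{*})\le D(K_{\frac n2,\frac n2})+2F$ entrywise, where $F$ is the indicator of the cut block $S\times\overline S$, one has $\|2F\|=2\sqrt{\sigma(n-\sigma)}\ll n$ for $\sigma=|S|\ll n$, while the top eigenvector of $F$ is nearly orthogonal to $\mathbf{1}$; hence the shift of the top eigenvalue is first order in $\mathbf{1}^{T}(2F)\mathbf{1}/n\approx 4\sigma$ plus $O(\|2F\|^{2}/\mathrm{gap})=O(\sigma)$, i.e.\ $\rho_D(G^{*})=\frac{3n}{2}-2+\Theta(\sigma)$. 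Optimizing your own two-value test vector gives exactly the same linear-in-$|S|$ correction; no test vector can reach $\frac{3n}{2}+c\sqrt{|S|\,n}$, because the perturbation upper bound already caps $\rho_D(G^{*})$ at $\frac{3n}{2}-2+\Theta(\sigma)$. Second, your upper bound $\rho_D(H)\le\rho_D(K_{\frac n2,\frac n2})+2\|E\|=\frac{3n}{2}-2+\Theta(\sqrt n)$ cannot be beaten by a $\Theta(|S|)=\Theta(k)$ correction: the theorem allows $n$ arbitrarily large relative to $k$ (only $n\ge 4k+8$ is assumed), and for $n\gg k^{2}$ one has $\frac{3n}{2}+\Theta(k)<\frac{3n}{2}-2+\sqrt{2n}$, so no contradiction is obtained. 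Closing this hole requires the sharp estimate $\rho_D(H)<\frac{3n}{2}+1$ (Lemma \ref{lem:Fan2_bound}), which does not follow from the norm-perturbation inequality you propose.

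There is also a structural flaw at the start: the dichotomy ``$p$ large $\Rightarrow$ $G$ globally sparse'' is false, since a partition with many parts may still contain a single part of size $\Theta(n)$ carrying $\Theta(n^{2})$ internal edges; what matters is the total size of the non-dominant parts, not $p$ (the paper needs to exclude only $p=n$ separately, then treats $p\ge 4$ uniformly and $p=2,3$ by hand). Notably, the ``crude Rayleigh bound coming from the all-ones vector'' that you explicitly set aside is precisely the tool the paper uses, and it suffices: combined with the bipartite distance count $W(G)\ge\frac12\sum_{i}\bigl(d_i+2(\tfrac n2-1)+3(\tfrac n2-d_i)\bigr)$ and Lemma \ref{lem:Fan2_bound}, it converts the spectral hypothesis into the edge bound $e(G)>\frac{n(n-3)}{4}$, after which the contradiction is pure partition edge-counting (via Lemma \ref{lem:alg_ineq}) together with the degree condition $\delta\ge k+2$ in the singleton cases. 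Your observations that $|S|\gtrsim 2k$ when $p=2$ and that $G\cong H$ is excluded by $\delta\ge k+2$ are correct, and the plan could likely be repaired by replacing $\sqrt{|S|\,n}$ with the correct linear-in-$|S|$ bound and invoking Lemma \ref{lem:Fan2_bound}, but as written the central comparison fails.
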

 In Section 2, we present some lemmas that will be used in our proofs. The proofs of Theorem \ref{thm:Main1} and Theorem \ref{thm:Main2} are given in Section 3.

\section{Preliminaries}

In this section, we present some necessary lemmas that will be used in our proofs. 

The following lemma provides an upper bound for the distance spectral radius of the extremal graph $K_{k-1} \vee (K_{n-k} \cup K_1)$. This result was proved by Fan et al. \cite{Fan2025} under the condition $n \ge 2k+6$. Since our main theorem requires $n \ge 2k+8$, this bound naturally applies.

\begin{lemma}[\cite{Fan2025}] \label{lem:Fan1_bound}
	Let $k \ge 2$ be an integer. If $n \ge 2k+6$, then
	\[
	\rho_D(K_{k-1} \vee (K_{n-k} \cup K_1)) < n+2.
	\]
\end{lemma}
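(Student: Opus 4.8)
The plan is to exploit the symmetry of $G := K_{k-1} \vee (K_{n-k} \cup K_1)$ and reduce the computation of $\rho_D(G)$ to a $3 \times 3$ eigenvalue problem. Partition $V(G)$ into three parts: $A$, the $k-1$ vertices of $K_{k-1}$; $B$, the $n-k$ vertices of $K_{n-k}$; and $C = \{c\}$, the single vertex of $K_1$. The graph has diameter $2$ — with $d_G(c, b) = 2$ for every $b \in B$ (a shortest path passes through any vertex of $A$, which is nonempty since $k \ge 2$) and every other distance equal to $1$ — so this partition is equitable for the distance matrix. Its quotient matrix is
\[
B_q = \begin{pmatrix} k-2 & n-k & 1 \\ k-1 & n-k-1 & 2 \\ k-1 & 2(n-k) & 0 \end{pmatrix},
\]
where the $(i,j)$ entry is the sum of distances from a vertex in the $i$-th part to all vertices in the $j$-th part. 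Because the partition is equitable and $D(G)$ is nonnegative and irreducible, the Perron eigenvector of $D(G)$ is constant on each part, so $\rho_D(G)$ equals the spectral radius $\rho(B_q)$. It therefore suffices to show $\rho(B_q) < n+2$.

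To bound $\rho(B_q)$ I would use the Collatz--Wielandt inequality: for any positive vector $x$ one has $\rho(B_q) \le \max_i (B_q x)_i / x_i$, with strict inequality in the conclusion whenever it holds coordinatewise. The naive choice $x = \mathbf{1}$ gives row sums $n-1$, $n$, and $2n-k-1$; the third is far larger than $n+2$ precisely because $c$ lies at distance $2$ from all of $B$, so this estimate is useless and the weight on the $C$-coordinate must be raised. I would instead take $x = (1, 1, u)^\top$, which gives $B_q x = \big((n-2)+u,\ (n-2)+2u,\ 2n-k-1\big)^\top$. The three requirements $(B_q x)_i < (n+2)x_i$ then reduce to $u < 4$, $u < 2$, and $u > (2n-k-1)/(n+2) = 2 - (k+5)/(n+2)$, respectively.

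Any $u$ in the nonempty open interval $\big(2 - \tfrac{k+5}{n+2},\, 2\big)$ — for instance $u = 2 - \tfrac{k+5}{2(n+2)}$ — makes all three inequalities strict, yielding $\rho_D(G) = \rho(B_q) < n+2$. The only genuinely tight point is the competition between the second row (which forces $u < 2$) and the third (which forces $u$ close to $2$); the gap between these bounds is exactly $(k+5)/(n+2) > 0$, so the admissible interval is never empty, and the hypothesis $n \ge 2k+6$ (far more than is needed here) comfortably ensures that the chosen weight $u$ is positive, as the Collatz--Wielandt bound requires. I expect the main obstacle to be purely presentational — justifying the equitable-partition reduction $\rho_D(G) = \rho(B_q)$ and verifying the three coordinatewise inequalities — rather than conceptual, since once the correct test vector is identified the estimate is immediate.
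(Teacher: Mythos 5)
Your proof is correct, but there is nothing in the paper to compare it against: the paper states this lemma as a quoted result of Fan et al.\ \cite{Fan2025} and gives no proof, relying entirely on the citation. Your argument is a valid, self-contained replacement. The partition into $A$, $B$, $\{c\}$ is indeed equitable for $D(G)$ (every distance equals $1$ except $d(c,b)=2$ for $b\in B$, which uses $k\ge 2$ so that $A\neq\emptyset$), the quotient matrix $B_q$ is computed correctly, and for $x=(1,1,u)^\top$ the three conditions $(B_qx)_i<(n+2)x_i$ do reduce to $u<4$, $u<2$ and $u>2-\frac{k+5}{n+2}$, so any $u$ in the nonempty interval $\bigl(2-\tfrac{k+5}{n+2},\,2\bigr)$ certifies $\rho(B_q)<n+2$ via the Collatz--Wielandt bound; note that this bound needs only nonnegativity of the matrix and positivity of the test vector, while irreducibility is needed (and available) only in the step identifying $\rho_D(G)$ with $\rho(B_q)$. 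Two observations. First, you could bypass the equitable-partition reduction entirely by applying Collatz--Wielandt directly to $D(G)$ with the vector equal to $1$ on $A\cup B$ and $u$ on $c$: since vertices in a common part have identical distance sums to each part, this produces exactly the same three inequalities and removes the one step you flagged as presentationally delicate. Second, your remark that $n\ge 2k+6$ is not actually used is correct --- your argument gives the bound for all $k\ge 2$ and $n\ge k+1$, so it proves slightly more than the cited statement; the more usual route for lemmas of this type (estimating the largest root of the cubic characteristic polynomial of the same quotient matrix) is where hypotheses like $n\ge 2k+6$ typically get consumed, and your test-vector argument avoids that root analysis altogether.
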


The Wiener index of a graph $G$, denoted by $W(G)$, is defined as the sum of distances between all unordered pairs of vertices, i.e., $W(G) = \sum_{1\le i < j \le n} d_{ij}$. The following lemma gives a lower bound for the distance spectral radius in terms of the Wiener index.

\begin{lemma} \label{lem:Wiener}
	Let $G$ be a connected graph of order $n$. Then
	\[
	\rho_D(G) \ge \frac{2W(G)}{n}.
	\]
\end{lemma}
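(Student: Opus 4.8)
The plan is to exploit the variational (Rayleigh quotient) characterization of the largest eigenvalue of a real symmetric matrix. Since $G$ is connected, all pairwise distances $d_{ij}$ are finite, and because $d_{ij} = d_{ji}$ with $d_{ii} = 0$, the distance matrix $D(G)$ is a real symmetric $n \times n$ matrix. Consequently, $\rho_D(G)$, being its largest eigenvalue, admits the Rayleigh quotient representation
\[
\rho_D(G) = \max_{x \in \mathbb{R}^n,\, x \neq 0} \frac{x^{\top} D(G)\, x}{x^{\top} x}.
\]

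The key step is simply to choose a convenient test vector and bound the maximum from below by the corresponding quotient. The natural choice is the all-ones vector $\mathbf{1} = (1, 1, \ldots, 1)^{\top}$. With this choice one computes $\mathbf{1}^{\top} \mathbf{1} = n$ and
\[
\mathbf{1}^{\top} D(G)\, \mathbf{1} = \sum_{i=1}^{n} \sum_{j=1}^{n} d_{ij} = 2 \sum_{1 \le i < j \le n} d_{ij} = 2W(G),
\]
where the factor of $2$ arises from the symmetry of $D(G)$ and the vanishing of the diagonal entries. Substituting these into the Rayleigh quotient yields $\rho_D(G) \ge 2W(G)/n$, as desired.

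Honestly, there is no genuine obstacle in this argument: the entire content is the Rayleigh quotient inequality $\rho_D(G) \ge \mathbf{1}^{\top} D(G)\, \mathbf{1} / \mathbf{1}^{\top} \mathbf{1}$ together with the elementary identity relating the sum of all entries of $D(G)$ to the Wiener index. The only point requiring a word of care is justifying that $D(G)$ is symmetric (hence diagonalizable with a real spectral maximum characterized variationally), which follows immediately from the symmetry of the graph distance and the connectedness of $G$ ensuring finiteness of all entries.
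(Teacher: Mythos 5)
Your proof is correct and follows exactly the same route as the paper's: the Rayleigh quotient characterization of the largest eigenvalue of the symmetric matrix $D(G)$, evaluated at the all-ones test vector $\mathbf{1}$, together with the identity $\mathbf{1}^{\top} D(G)\,\mathbf{1} = 2W(G)$. No differences worth noting.
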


\begin{proof}
	Let $\mathbf{1}$ be the all-ones vector of dimension $n$. By the Rayleigh quotient principle, we have
	\[
	\rho_D(G) = \max_{x \in \mathbb{R}^n, x \ne 0} \frac{x^T D(G) x}{x^T x} \ge \frac{\mathbf{1}^T D(G) \mathbf{1}}{\mathbf{1}^T \mathbf{1}} = \frac{\sum_{1\le i , j \le n} d_{ij}}{n} = \frac{2W(G)}{n}.
	\]
\end{proof}

\begin{lemma}[\cite{Fan2025}] \label{lem:comb_ineq}
	Let $a$ and $b$ be two positive integers. If $a \ge b$, then
	\[
	\binom{a}{2} + \binom{b}{2} < \binom{a+1}{2} + \binom{b-1}{2}.
	\]
\end{lemma}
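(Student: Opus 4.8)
The plan is to prove the inequality by directly computing the difference between the right-hand side and the left-hand side and showing that it is strictly positive. The key observation is the elementary telescoping identity $\binom{m+1}{2} - \binom{m}{2} = m$, which is valid for every nonnegative integer $m$ and follows at once from $\binom{m}{2} = \tfrac{m(m-1)}{2}$.

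First I would rewrite the claimed inequality in the equivalent grouped form
\[
\left[\binom{a+1}{2} - \binom{a}{2}\right] - \left[\binom{b}{2} - \binom{b-1}{2}\right] > 0.
\]
Applying the identity above to each bracket, the first bracket equals $a$ and the second equals $b-1$, so the left-hand side collapses to $a - (b-1) = a - b + 1$. Since $a \ge b$ by hypothesis, this quantity satisfies $a - b + 1 \ge 1 > 0$, which is exactly the strict inequality to be proved.

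The only point requiring a small remark is the boundary case $b = 1$, where the term $\binom{b-1}{2} = \binom{0}{2}$ appears. Adopting the standard convention $\binom{0}{2} = 0$ (equivalently, evaluating $\tfrac{m(m-1)}{2}$ at $m = 0$), the identity $\binom{b}{2} - \binom{b-1}{2} = b-1$ continues to hold, so no separate case analysis is needed. I do not anticipate any genuine obstacle: the statement is a purely arithmetic fact, and the entire argument reduces to the single telescoping computation above.
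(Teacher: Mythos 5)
Your proof is correct and complete, including the careful handling of the boundary case $b=1$ via the convention $\binom{0}{2}=0$. The paper itself gives no proof of this lemma (it is quoted from Fan et al.\ \cite{Fan2025}), but your telescoping computation $\binom{m+1}{2}-\binom{m}{2}=m$, yielding a difference of $a-b+1\ge 1>0$, is exactly the standard argument one would expect, so there is nothing further to add.
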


The next lemma is an algebraic inequality that plays a crucial role in the proof for the bipartite case. The proof is purely algebraic and identical to that of Lemma 3.4 in \cite{Fan2025}, so we omit it here.

\begin{lemma}[\cite{Fan2025}] \label{lem:alg_ineq}
	Let $s \ge 1$ be an integer. Let $a_1, \dots, a_s$ and $b_1, \dots, b_s$ be non-negative integers such that $a_i+b_i \ge 2$ for all $1 \le i \le s$. Let $a = \sum_{i=1}^s a_i$ and $b = \sum_{i=1}^s b_i$. If $b \ge a$, then
	\[
	\sum_{i=1}^s a_i b_i \le a(b-(s-1)).
	\]
\end{lemma}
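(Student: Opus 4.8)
The plan is to establish the equivalent form $\sum_{i=1}^{s} a_i b_i \le a(b-(s-1))$ by first maximizing the left-hand side over the $a_i$ with the $b_i$ frozen. For fixed $b_1,\dots,b_s$ the quantity $\sum_i a_i b_i$ is linear in $(a_1,\dots,a_s)$, the only constraints being $\sum_i a_i = a$ and $a_i \ge m_i := \max\{0,\,2-b_i\}$ (the latter encoding $a_i+b_i\ge 2$). Consequently its maximum is attained by setting each $a_i$ to its lower bound $m_i$ and placing the surplus $R:=a-\sum_i m_i$ on an index $i^{\ast}$ achieving $b_{i^{\ast}}=\max_i b_i$; here $R\ge 0$ since $a_i\ge m_i$ forces $a\ge\sum_i m_i$. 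Writing $k=|\{i:b_i=0\}|$ and $l=|\{i:b_i=1\}|$, we have $\sum_i m_i = 2k+l=:p$ and the closed form
\[
\max \sum_i a_i b_i \;=\; a\,b_{i^{\ast}} - \sum_i m_i(b_{i^{\ast}}-b_i) \;=\; R\,b_{i^{\ast}} + l .
\]
It therefore suffices to prove $R\,b_{i^{\ast}}+l \le a(b-(s-1))$ for every admissible choice of the $b_i$.

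Next I would bound $b_{i^{\ast}}$. If at least one $b_i\ge 2$, then separating $i^{\ast}$ and noting that the remaining $k$ zeros, $l$ ones, and $s-k-l-1$ further entries of size $\ge 2$ contribute at least $l+2(s-k-l-1)$ to $b$ yields $b_{i^{\ast}}\le b-2s+2k+l+2$. (If every $b_i\le 1$, then $a_i+b_i\ge 2$ and $b\ge a$ force $a_i=b_i=1$ for all $i$, the equality configuration, which is checked directly; alternatively $b_{i^{\ast}}\le 1$ settles this case at once.) Since $R\ge 0$, substituting this bound for $b_{i^{\ast}}$ and using $a=R+p$ reduces the goal to the single inequality
\[
R(s-1-p) + p\,(b-s+1) - l \;\ge\; 0 .
\]

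The last step is a two-case analysis on the sign of $s-1-p$, and it is here that the global hypothesis $b\ge a$ is indispensable; a termwise estimate cannot work, because an index with $b_i=0$ is forced to have $a_i\ge 2$ and so contributes negatively. If $p\le s-1$, then $R(s-1-p)\ge 0$, and $b\ge s$ (which follows from $b\ge a$ and $a+b\ge 2s$) gives $p(b-s+1)\ge p\ge l$, so the expression is non-negative. If $p\ge s$, I would instead invoke $b\ge a=R+p$ to get $p(b-s+1)\ge p(R+p-s+1)$; after cancellation the left-hand side is at least $R(s-1)+p(p-s+1)-l$, which is non-negative since $R(s-1)\ge 0$ and $p-s+1\ge 1$ makes $p(p-s+1)\ge p\ge l$. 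The main obstacle is precisely this coupling: the bound is attained at the all-ones configuration, so the chain of estimates must be sharp, and the two regimes $p\le s-1$ and $p\ge s$ have to be matched using the two different consequences $b\ge s$ and $b\ge a$ of the hypotheses.
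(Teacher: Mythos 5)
Your proof is correct, but there is no in-paper argument to compare it against: the authors explicitly omit the proof of Lemma \ref{lem:alg_ineq}, stating that it is ``purely algebraic and identical to that of Lemma 3.4 in \cite{Fan2025}.'' So your proposal must be judged as a self-contained replacement, and it holds up under line-by-line checking. The linear-programming step is sound: for fixed $b_1,\dots,b_s$ the feasible set $\{a_i\ge m_i,\ \sum_i a_i=a\}$ is a translated simplex, and $\sum_i a_ib_i\le \sum_i m_ib_i+R\max_i b_i=l+Rb_{i^{\ast}}$ follows from $a_i-m_i\ge 0$, with $\sum_i m_ib_i=l$ as you say. The counting bound $b_{i^{\ast}}\le b-2s+2k+l+2$ is valid whenever some $b_i\ge 2$; the degenerate case (all $b_i\le 1$) indeed forces $a_i=b_i=1$ for all $i$ via $s\le a\le b\le s$, giving equality directly. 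The reduction of the target to $R(s-1-p)+p(b-s+1)-l\ge 0$ with $p=2k+l$ is algebraically exact, and both regimes close: for $p\le s-1$ you use $b\ge s$ (correct, since $2b\ge a+b\ge 2s$) to get $p(b-s+1)\ge p\ge l$; for $p\ge s$ you use $b\ge a=R+p$, whence $R(s-1-p)+pR=R(s-1)\ge 0$ and $p(p-s+1)\ge p\ge l$. Your closing remark about where $b\ge a$ enters is apt: an index with $b_i=0$ forces $a_i\ge 2$ and contributes negatively, so no termwise estimate can work, and the two case-specific consequences $b\ge s$ and $b\ge a$ are exactly what patch this. Whether this coincides with Fan et al.'s original argument cannot be determined from the present paper, but as a freestanding proof of the lemma it is complete.
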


Finally, we state the bound for the distance spectral radius of the balanced bipartite extremal graph, derived in \cite{Fan2025}. Note that in our context, we require $n \ge 4k+8$, which is stronger than the condition $n \ge 4k+4$ in the original lemma.

\begin{lemma}[\cite{Fan2025}] \label{lem:Fan2_bound}
	Let $k \ge 2$ be an integer. If $n \ge 4k+4$, then
	\[
	\rho_D\left(K_{\frac{n}{2}, \frac{n}{2}} \setminus E\left(K_{1, \frac{n}{2}-k+1}\right)\right) < \frac{3n}{2} + 1.
	\]
\end{lemma}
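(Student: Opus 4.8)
The plan is to exploit the high symmetry of the extremal graph $G^{*}=K_{\frac n2,\frac n2}\setminus E(K_{1,\frac n2-k+1})$ and reduce the computation of $\rho_D(G^{*})$ to the spectral radius of a small matrix. Writing $m=n/2$, I would first fix the structure: set $A=\{u\}\cup A'$ and $B=B_1\cup B_2$, where $u$ is the centre of the deleted star, so $u$ is adjacent to the $k-1$ vertices of $B_1$ and non-adjacent to the $m-k+1$ vertices of $B_2$, while every vertex of $A'$ (there are $m-1$ of them) is adjacent to all of $B$. A short case check then records all pairwise distances: the only distance exceeding $2$ is $d(u,b)=3$ for $b\in B_2$, while $d(u,a')=2$ for $a'\in A'$, every remaining between-part pair is at distance $1$, and every remaining within-part pair is at distance $2$.

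The four sets $\{u\},A',B_1,B_2$ are exactly the orbits of $\mathrm{Aut}(G^{*})$, so they form an equitable partition of the distance matrix $D(G^{*})$. Consequently the (unique, positive) Perron eigenvector is constant on each orbit, and $\rho_D(G^{*})$ equals the spectral radius of the $4\times 4$ quotient matrix
\[
B=\begin{pmatrix}
0 & 2(m-1) & k-1 & 3(m-k+1)\\
2 & 2(m-2) & k-1 & m-k+1\\
1 & m-1 & 2(k-2) & 2(m-k+1)\\
3 & m-1 & 2(k-1) & 2(m-k)
\end{pmatrix},
\]
whose $(i,j)$ entry is the sum of the distances from a fixed vertex of the $i$-th class to all vertices of the $j$-th class. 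It then remains to prove $\rho(B)<\tfrac{3n}{2}+1=3m+1$.

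For this last step I would use the Perron–Frobenius criterion for nonnegative irreducible matrices: it suffices to exhibit a positive vector $v=(x_1,x_2,x_3,x_4)^{T}$ with $Bv<(3m+1)\,v$ entrywise, which forces $\rho(B)<3m+1$. The choice of $v$ is guided by the dominant $2\times2$ block on the two large classes $A'$ and $B_2$ (each of size $\approx m$), which has Perron root $\approx 3m$ with eigenvector $\approx(1,1)$; thus I would take $x_2\approx x_4$ and tune $x_1$ (the $u$-coordinate) and $x_3$ (the $B_1$-coordinate) to satisfy the remaining inequalities, invoking $n\ge 4k+4$, i.e. $m\ge 2k+2$, to control the $k$-dependent terms. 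Alternatively, since $B$ is nonnegative and irreducible its Perron root is its largest real root, so one may instead verify directly that the characteristic polynomial $p(x)=\det(xI-B)$ satisfies $p(x)>0$ for all $x\ge 3m+1$.

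The main obstacle is the narrowness of the window. The lower bound of Lemma \ref{lem:Wiener}, namely $\rho_D(G^{*})\ge \tfrac{2W(G^{*})}{n}=3m-\tfrac{2(k-1)}{m}$, shows that the true value lies only $O(1)$ below the target $3m+1$, so the inequalities in the test-vector (or polynomial) step must be carried out keeping all lower-order terms rather than only the leading asymptotics. The delicate constraints are the $u$-row, whose large entry $3(m-k+1)$ on $x_4$ forces $x_1$ to be noticeably larger than the other coordinates, and the $B_2$-row, whose entry $3$ on $x_1$ then feeds that excess back into the balance; reconciling these two against a slack of roughly $1$ is precisely where the hypothesis $m\ge 2k+2$ is needed.
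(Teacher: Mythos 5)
Your set-up is sound as far as it goes: the distance table for $G^{*}=K_{\frac n2,\frac n2}\setminus E(K_{1,\frac n2-k+1})$ is correct, the four classes $\{u\},A',B_1,B_2$ do form an equitable partition of $D(G^{*})$, your quotient matrix $B$ is exactly right, and the Perron--Frobenius criteria you invoke (a positive $v$ with $Bv<(3m+1)v$, or positivity of the characteristic polynomial on $[3m+1,\infty)$) are valid. (For the record, this paper does not prove the lemma at all -- it imports it from \cite{Fan2025} -- so the benchmark is a complete self-contained argument.) The problem is that your proof stops exactly where the content of the lemma begins. Reducing to ``$\rho(B)<3m+1$ for the explicit $4\times4$ matrix $B$'' is routine bookkeeping; the lemma \emph{is} the verification of that inequality for all $m\ge 2k+2$, and you never exhibit a test vector, never compute $\det(xI-B)$, and never check a single one of the four inequalities. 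A plan, together with the (correct) observation that the window between $\rho_D(G^{*})\ge 3m-\frac{2(k-1)}{m}$ and the target $3m+1$ is narrow, is not a proof.

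Worse, the one concrete guideline you give for the missing step is provably a dead end. Write $t=k-1$ and try your suggested starting point $x_2=x_4=1$. Row 1 forces $x_1>\frac{5m-2-3t+tx_3}{3m+1}$, which tends to $\frac53$ as $m\to\infty$ with $k$ fixed; row 3 forces $x_3>\frac{x_1+3m-1-2t}{3m+3-2t}$, which tends to $1$; but row 4 demands $3x_1+2tx_3<2t+4$, and in the limit the left-hand side is $5+2t$, exceeding the right-hand side by exactly $1$. So with $x_2=x_4$ the system is infeasible for every $k$ once $m$ is large -- the corner your heuristic points to is precisely the one that cannot work. Numerically the Perron vector of $B$ has $x_2$ strictly below $x_4$ by a margin of order $1/m$ (e.g.\ for $k=2$, $m=20$ it is roughly $(1.55,\,0.943,\,0.950,\,1)$ with $\rho(B)\approx 60.47$ against the target $61$), so the ``tuning'' requires $m$- and $k$-dependent corrections of size $\Theta(1/m)$ making four inequalities with only $O(1)$ total slack hold simultaneously for all $m\ge 2k+2$. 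Establishing that feasibility (or, equivalently, grinding out the quartic $\det(xI-B)$ and bounding its largest root) is the entire proof, and it is missing.
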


\section{Proofs of Main Results}

In this section, we present the proofs of Theorem \ref{thm:Main1} and Theorem \ref{thm:Main2}.
\begin{proof}[Proof of Theorem \ref{thm:Main1}]
	Suppose to the contrary that $G$ is a graph of order $n\ge 2k+8$ and it does not have property $P(k, \delta)$. By Theorem \ref{thm:fractional packing}, this implies that the fractional packing number satisfies
	\[
	\nu_f(G)\le k+\frac{\delta-1}{\delta}.
	\]
	According to the definition of $\nu_f(G)$, there exists a partition $\mathcal{P} = \{V_1, V_2, \dots, V_s\}$ of $V(G)$ with $s \ge 2$ such that
	\begin{equation} \label{eq:1}
		\sum_{1 \le i < j \le s} e(V_i, V_j)\le (k + \frac{\delta-1}{\delta})(s-1) < (k+1)(s-1).
	\end{equation}
	By the assumption and Lemma \ref{lem:Fan1_bound}, we have 
	\begin{equation} \label{eq:2}
		\rho_D(G) \le \rho_D(K_{k-1} \vee (K_{n-k} \cup K_1)) < n+2.
	\end{equation}
	On the other hand, let $d_G(v_i)$ (or simply $d_i$) denote the degree of vertex $v_i$. Then a lower bound for the Wiener index $W(G)$ is given by
	\[
	W(G) \ge \frac{1}{2}\sum_{i=1}^{n}(d_i + 2(n-1-d_i)) = n(n-1) - e(G).
	\]
	Using Lemma \ref{lem:Wiener}, we obtain a lower bound for the distance spectral radius of $G$
	\begin{equation} \label{eq:3}
		\rho_D(G) \ge \frac{2W(G)}{n} \ge 2(n-1) - \frac{2e(G)}{n}.
	\end{equation}
	Combining Inequalities \eqref{eq:2} and \eqref{eq:3} yields a lower bound for the total number of edges $e(G)$, namely
	\begin{equation} \label{eq:4}
		e(G) > \frac{n(n-4)}{2}.
	\end{equation}
	
	Let $|V_i|=n_i$ for $1 \le i \le s$. Without loss of generality, assume that $n_1 \le n_2 \le \dots \le n_s$. We now consider the possible values of $s$.
	
	\noindent\textbf{Case 1. $s \ge 4$.}
	
	Combining Inequality \eqref{eq:1} and Lemma \ref{lem:comb_ineq}, we obtain:
	\begin{align*}
		e(G) &= \sum_{i=1}^{s} e(G[V_i]) + \sum_{1 \le i < j \le s} e(V_i, V_j) \\
		&< \sum_{i=1}^{s} \binom{n_i}{2} + (k+1)(s-1) \\
		&\le \binom{n-s+1}{2} +(k+1)(s-1) \\
		&\le \frac{(n-s+1)(n-s)}{2} + \frac{(n-6)(s-1)}{2} && (\text{since } n \ge 2k+8) \\
		&= \frac{n^2-ns+s^2-7s+6}{2} \\
		&= \frac{n(n-4)}{2} + \left(\frac{(4-s)n}{2} + \frac{s^2}{2} - \frac{7s}{2} + 3\right) \\
		&\le \frac{n(n-4)}{2} + \left(-\frac{3s}{2} + 3\right) && (\text{since } n \ge s)\\
		&< \frac{n(n-4)}{2}. && (\text{since } s \ge 4)
	\end{align*}
	This contradicts Inequality \eqref{eq:4}. Thus, $s \le 3$.

	\noindent\textbf{Case 2. $s = 2$.}

	From Inequality \eqref{eq:1}, we have $e(V_1, V_2) \le k + \frac{\delta-1}{\delta}$.
	Since $e(V_1, V_2)$ is an integer and $0 < \frac{\delta-1}{\delta} < 1$, we have $e(V_1, V_2) \le \lfloor k + \frac{\delta-1}{\delta} \rfloor = k$.
	
	We next assert that $n_1=1$. If not, $n_1 \ge 2$. Combining this with Lemma \ref{lem:comb_ineq}, we get
	\begin{align*}
		e(G) &= e(G[V_1]) + e(G[V_2]) + e(V_1, V_2) \\
		&\le \binom{n_1}{2} + \binom{n_2}{2} + k \\
		&\le \binom{2}{2} + \binom{n-2}{2}+k && (\text{since } n_1 \ge 2)\\
		&= \frac{n^2-5n+6}{2}+k+1 \\
		&= \frac{n(n-4)}{2} - \left(\frac{n}{2} - k - 4\right) \\
		&\le \frac{n(n-4)}{2}, && (\text{since } n \ge 2k+8)
	\end{align*}
	which contradicts Inequality \eqref{eq:4}. It follows that $n_1=1$.

	Without loss of generality, we assume $V_1=\{v_1\}$. Noting that $d_G(v_1)\ge \delta$ and
	\[
	d_G(v_1)= e(V_1, V_2)\le k+\frac{\delta-1}{\delta}=k+1-\frac{1}{\delta},
	\]
	we obtain
	\[
	\delta\le k+1-\frac{1}{\delta} < k+1,
	\]
	which contradicts the assumption that $\delta\ge k+2$.
	
	\noindent\textbf{Case 3. $s = 3$.}
	
	From Inequality \eqref{eq:1}, we know that $\sum_{1 \le i < j \le 3} e(V_i, V_j) \le 2(k+\frac{\delta-1}{\delta}) < 2(k+1)$, which implies $\sum_{1 \le i < j \le 3} e(V_i, V_j) \le 2k+1$. 
	
	We assert that $n_1=n_2=1$. Suppose to the contrary that this is not true. Assume $n_2 \ge 2$ and $n_1 \ge 1$. By Lemma \ref{lem:comb_ineq}, we have
	\begin{align*}
		e(G) &= e(G[V_1]) + e(G[V_2]) + e(G[V_3]) + \sum_{1 \le i < j \le 3} e(V_i, V_j) \\ 
		&\le \binom{n_1}{2} + \binom{n_2}{2} + \binom{n_3}{2} + 2k+1 \\
		&\le \binom{2}{2} + \binom{n-3}{2} + 2k+1 \\
		&= 1 + \frac{(n-3)(n-4)}{2} + 2k+1 \\
		&= \frac{n(n-4)}{2} + 2k+8-\frac{3}{2}n \\
		&< \frac{n(n-4)}{2} && (\text{since } n \ge 2k+8)
	\end{align*}
	This contradicts Inequality \eqref{eq:4}. Thus, we must have $n_1 = n_2 = 1$.
	
	Without loss of generality, we assume $V_1=\{v_1\}$ and $V_2=\{v_2\}$. Noting that $d_G(v_1), d_G(v_2)\ge \delta$ and 
	\[
	d_G(v_1) + d_G(v_2)= \sum_{1 \le i < j \le 3} e(V_i, V_j) + e(V_1, V_2)\le 2(k+\frac{\delta-1}{\delta})+1=2k+3 - \frac{2}{\delta},
	\]
	we obtain
	\[
	2\delta\le 2k+3-\frac{2}{\delta} < 2k+3,
	\]
	which contradicts the assumption that $\delta\ge k+2$.
	
	All cases lead to a contradiction. This completes the proof.
\end{proof}
	We are now ready to prove Theorem \ref{thm:Main2}.
	
\begin{proof}[Proof of Theorem \ref{thm:Main2}]
	Suppose to the contrary that $G = (X, Y)$ is a balanced bipartite graph of order $n\ge 4k+8$ and it does not have property $P(k, \delta)$. By Theorem \ref{thm:fractional packing}, this implies that the fractional packing number satisfies
	\[
	\nu_f(G)\le k+\frac{\delta-1}{\delta}.
	\]
	According to the definition of $\nu_f(G)$, there exists a partition $\mathcal{P} = \{V_1, V_2, \dots, V_s\}$ of $V(G)$ with $s \ge 2$ such that
	\begin{equation} \label{eq:5}
		\sum_{1 \le i < j \le s} e(V_i, V_j)\le (k + \frac{\delta-1}{\delta})(s-1) < (k+1)(s-1).
	\end{equation}
	By the assumption and Lemma \ref{lem:Fan2_bound}, we have 
	\begin{equation} \label{eq:6}
		\rho_D(G) \le \rho_D\left(K_{\frac{n}{2}, \frac{n}{2}} \setminus E\left(K_{1, \frac{n}{2}-k+1}\right)\right) < \frac{3n}{2} + 1.
	\end{equation}
	On the other hand, let $d_G(v_i)$ (or simply $d_i$) denote the degree of vertex $v_i$. Then a lower bound for the Wiener index $W(G)$ is given by
	\[
	W(G) \ge \frac{1}{2}\sum_{i=1}^{n}(d_i + 2(\frac{n}{2}-1) + 3(\frac{n}{2}-d_i)) = \frac{1}{2}(\frac{5n}{2}-2)n - 2e(G).
	\]
	Using Lemma \ref{lem:Wiener}, we obtain a lower bound for the distance spectral radius
	\begin{equation} \label{eq:7}
		\rho_D(G) \ge \frac{2W(G)}{n}\ge \frac{5n}{2}-2 - \frac{4e(G)}{n}.
	\end{equation}
	Combining Inequalities \eqref{eq:6} and \eqref{eq:7} yields a lower bound for the total number of edges $e(G)$, namely
	\begin{equation} \label{eq:8}
		e(G) > \frac{n(n-3)}{4}.
	\end{equation}
	
	Clearly, $n \ge s$. We assert that $n \ge s+1$. Suppose to the contrary that $n=s$. Then by the condition $n \ge 4k+8$ and Inequality \eqref{eq:5}, we have
	\[
	e(G) = \sum_{1 \le i < j \le s} e(V_i, V_j) < (k+1)(n-1) \le \frac{(n-4)(n-1)}{4} < \frac{n(n-3)}{4},
	\]
	which contradicts Inequality \eqref{eq:8}. Thus, $n \ge s+1$.
	
	Without loss of generality, assume that $|V_1| \le |V_2| \le \dots \le |V_s|$. Let $|V_i \cap X| = a_i$ and $|V_i \cap Y| = b_i$ for $1 \le i \le s$.
	
	\noindent\textbf{Case 1. $s \ge 4$.}
	
	Suppose that there are $s_1$ trivial parts in $X$ (referring to the parts in the partition $\mathcal{P} = \{V_1, V_2, \dots, V_s\}$) and $s_2$ trivial parts in $Y$. Without loss of generality, assume $s_1 \ge s_2$. Then we have $\frac{n}{2} - s_1 \le \frac{n}{2} - s_2$. Let $s' = s - s_1 - s_2$. By Lemma \ref{lem:alg_ineq} and Inequality \eqref{eq:1}, we obtain
	\begin{align*}
		e(G) &= \sum_{i=1}^{s} e(G[V_i]) + \sum_{1 \le i < j \le s} e(V_i, V_j) \\
		&< \sum_{i=1}^{s} a_ib_i + (k+1)(s-1) \\
		&\le \left(\frac{n}{2} - s_1\right)\left(\frac{n}{2}-s_2-(s'-1)\right) + (k+1)(s-1) \\
		&= \frac{n^2}{4} - \frac{n}{2}(s_1+s_2+s'-1) + s_1(s_2+s'-1) + (k+1)(s-1) \\
		&\le \frac{n^2}{4} - \frac{n}{2}(s_1+s_2+s'-1) + \left(\frac{s_1+s_2+s'-1}{2}\right)^2 + (k+1)(s-1) && (\text{since } xy \le ((x+y)/2)^2) \\
		&= \frac{n^2}{4} - \frac{n}{2}(s-1) + \frac{(s-1)^2}{4} + (k+1)(s-1) \\
		&= \frac{n(n-3)}{4} - \left(\frac{n(2s-5)}{4} - \frac{(s-1)^2}{4} - k(s-1) - s + 1\right) \\
		&\le \frac{n(n-3)}{4} - \left(\frac{n(2s-5)}{4} - \frac{(s-1)^2}{4} - \frac{n-8}{4}(s-1) - s+1\right) && (\text{since } n \ge 4k+8) \\
		&= \frac{n(n-3)}{4} - \frac{(s-4)n-s^2+6s-5}{4} \\
		&\le \frac{n(n-3)}{4} - \frac{(s-4)(s+1)-s^2+6s-5}{4} && (\text{since } n \ge s+1 \text{ and } s \ge 4) \\
		&= \frac{n(n-3)}{4} - \frac{3s-9}{4} \\
		&< \frac{n(n-3)}{4}. && (\text{since } s \ge 4)
	\end{align*}
	This contradiction implies that $s \le 3$.
	
	\noindent\textbf{Case 2. $s = 2$.}
	
	From Inequality \eqref{eq:5}, we have $e(V_1, V_2) \le k + \frac{\delta-1}{\delta}$.
	Since $e(V_1, V_2)$ is an integer and $0 < \frac{\delta-1}{\delta} < 1$, we obtain $e(V_1, V_2) \le \lfloor k + \frac{\delta-1}{\delta} \rfloor = k$.
	
	We assert that $|V_1| = 1$. Suppose to the contrary that $|V_1| \ge 2$. Without loss of generality, assume $a_1 \ge b_1$. If $b_1 = 0$, then $a_1 \ge 2$ since $|V_1| = a_1 + b_1 \ge 2$.
	Let $V_1 = \{v_1, v_2, \ldots, v_{a_1}\}$. Since $G$ is bipartite and $V_1 \subseteq X$, there are no edges within $V_1$. Using $\delta \ge k+2$, the sum of degrees for vertices in $V_1$ satisfies $2(k+2)\le \sum_{i=1}^{a_1}d_i = e(V_1,V_2)\le k$, which is a contradiction clearly.
	Thus, we must have $b_1 \ge 1$.
	
	Since $a_1 + a_2 = b_1 + b_2 = \frac{n}{2}$, we have
	\begin{align*}
		e(G) &= e(G[V_1]) + e(G[V_2]) + e(V_1, V_2) \\
		&\le a_1b_1 + a_2b_2 + k \\
		&= a_1b_1 + \left(\frac{n}{2} - a_1\right)\left(\frac{n}{2} - b_1\right) + k \\
		&\le a_1b_1 + \left(\frac{n}{2} - a_1\right)\left(\frac{n}{2} - b_1\right) + \frac{n-8}{4} && (\text{since } n \ge 4k+8) \\
		&= \frac{n(n-3)}{4} - \left(\frac{(a_1+b_1-2)n}{2} - 2a_1b_1 + 2\right) \\
		&\le \frac{n(n-3)}{4} - \left((a_1+b_1-2)(a_1+b_1) - 2a_1b_1 + 2\right) && (\text{since } n = |V_1| + |V_2| \ge 2(a_1+b_1)) \\
		&\le \frac{n(n-3)}{4} - \left(4(\sqrt{a_1 b_1}-1)\sqrt{a_1b_1} - 2a_1b_1 + 2\right) && (\text{since } a_1+b_1 \ge 2\sqrt{a_1 b_1}) \\
		&= \frac{n(n-3)}{4} - 2(\sqrt{a_1b_1} - 1)^2 \\
		&\le \frac{n(n-3)}{4}.
	\end{align*}
	This contradicts Inequality \eqref{eq:8}. Thus, we must have $|V_1| = 1$.
	
	Without loss of generality, we assume $V_1=\{v_1\}$. Noting that $d_G(v_1)\ge \delta$ and
	\[
	d_G(v_1)= e(V_1, V_2)\le k+\frac{\delta-1}{\delta}=k+1-\frac{1}{\delta},
	\]
	we have
	\[
	\delta\le k+1-\frac{1}{\delta} < k+1,
	\]
	which contradicts the assumption that $\delta\ge k+2$.
	
	\noindent\textbf{Case 3. $s = 3$.}
	
	From Inequality \eqref{eq:1}, we know that $\sum_{1 \le i < j \le 3} e(V_i, V_j) \le 2(k+\frac{\delta-1}{\delta}) < 2(k+1)$, which implies $\sum_{1 \le i < j \le 3} e(V_i, V_j) \le 2k+1$.
	
	Recall that $|V_1|\le |V_2|\le |V_3|$, we assert that $|V_1| = |V_2| = 1$. Suppose to the contrary that $|V_2| \ge 2$. Without loss of generality, assume $a_2 \le b_2$. 
	Note that $\sum_{i=1}^3 a_i = \sum_{i=1}^3 b_i = \frac{n}{2}$. We consider two subcases for $|V_1|$.
	If $|V_1|=1$, without loss of generality, assume $a_1=1$ and $b_1=0$. Then $a_1 b_1 = 0$. Since $a_3 = \frac{n}{2} - a_2 - a_1 = \frac{n}{2} - a_2 - 1$ and $b_3 = \frac{n}{2} - b_2 - b_1 = \frac{n}{2} - b_2$, we have $a_1 b_1 + a_3 b_3 = \left(\frac{n}{2} - a_2 - 1\right)\left(\frac{n}{2} - b_2\right)$.
	If $|V_1| = a_1 + b_1 \ge 2$, by Lemma \ref{lem:alg_ineq}, we get $a_1 b_1 + a_3 b_3 \le \left(\frac{n}{2} - a_2 - 1\right)\left(\frac{n}{2} - b_2\right)$.
	
	Then we have
	\begin{align*}
		e(G) &= e(G[V_1]) + e(G[V_2]) + e(G[V_3]) + \sum_{1 \le i < j \le 3} e(V_i, V_j) \\
		&\le a_1b_1+a_2b_2+a_3b_3+2k+1 \\
		&\le a_2b_2+ \left(\frac{n}{2}-a_2-1\right)\left(\frac{n}{2} - b_2\right) + 2k + 1  \\
		&\le \frac{n^2}{4} - \frac{n}{2}(a_2+b_2+1)+2a_2b_2+b_2 + \frac{n-6}{2} && (\text{since } n \ge 4k+8)  \\
		&\le \frac{n(n-3)}{4} - \left(\frac{(2(a_2+b_2)-3)n}{4} - 2a_2b_2 - b_2 + 3\right) \\
		&< \frac{n(n-3)}{4} - \left(\frac{(2(a_2+b_2)-3)(a_2+b_2)}{2} - 2a_2b_2 - b_2 + 3\right) && (\text{since } n > 2(a_2+b_2)) \\
		&= \frac{n(n-3)}{4} - \left(a_2\left(a_2-\frac{3}{2}\right) + b_2\left(b_2-\frac{5}{2}\right) + 3\right) \\
		&< \frac{n(n-3)}{4}. \quad \left(\text{since } a_2\left(a_2-\frac{3}{2}\right)\ge -\frac{9}{16} \text{ and }b_2\left(b_2-\frac{5}{2}\right)\ge -\frac{25}{16}\right)
	\end{align*}
	This contradicts Inequality \eqref{eq:8}. Thus, we must have $|V_1| = |V_2| = 1$.

	Without loss of generality, we assume $V_1=\{v_1\}$ and $V_2=\{v_2\}$. Noting that $d_G(v_1), d_G(v_2)\ge \delta$ and 
	\[
	d_G(v_1) + d_G(v_2)= \sum_{1 \le i < j \le 3} e(V_i, V_j) + e(V_1, V_2)\le 2(k+\frac{\delta-1}{\delta}) + 1=2k+3 - \frac{2}{\delta},
	\]
	we have
	\[
	2\delta\le 2k+3-\frac{2}{\delta} < 2k+3,
	\]
	which contradicts the assumption that $\delta\ge k+2$.
	
	All cases lead to a contradiction. This completes the proof.
\end{proof}

\section*{Declaration of competing interest}
\quad\quad The authors declare that they have no known competing financial interests or personal relationships that could have appeared to influence the work reported in this paper.

\section*{Data availability}
\quad\quad No data was used for the research described in the article.

\end{document}